\newcommand{\comment}[1]{}
\newtheorem{theorem}{Theorem}
\newtheorem{lemma}{Lemma}
\newcommand\supp{\qopname\relax o{supp}}
\newcommand{\RR}{\mathbb R}
\newcommand{\cH}{\mathcal{H}}
\newcommand{\Lh}{\mathcal{L}(\mathcal{H})}
\newcommand{\Lhs}{\mathcal{L}(\mathcal{H})_{sa}}
\newcommand{\Lhp}{\mathcal{L}(\mathcal{H})_+}
\newcommand{\Lhpi}{\mathcal{L}(\mathcal{H})_{++}}
\newcommand{\Ph}{\mathcal{P}_1(\cH)}
\newcommand\tr{\operatorname{Tr}}
\newcommand\sign{\operatorname{sign}}
\begin{document}
\title[R\'enyi type relative entropies and maximal $f$-divergences]{Maps on positive operators preserving R\'enyi type relative entropies and maximal $f$-divergences}
\author{Marcell Ga\'al and Gerg\H{o} Nagy}

\address{Bolyai Institute, University of Szeged \newline  \indent H-6720 Szeged, Aradi v\'ertan\'uk tere 1. \newline  \indent and \newline  \indent MTA-DE "Lend\"ulet" Functional Analysis Research Group \newline \indent Institute of Mathematics, University of Debrecen \newline \indent H-4002 Debrecen, PO. Box 400} \email{marcell.gaal.91@gmail.com}
\email{nagyg@science.unideb.hu}

\begin{abstract} In this paper we deal with two quantum relative entropy preserver problems on the cones of positive (either positive definite or positive semidefinite) operators. The first one is related to a quantum R\'enyi relative entropy like quantity which plays an important role in classical-quantum channel decoding. The second one is connected to the so-called maximal $f$-divergences introduced by D. Petz and M. B. Ruskai who considered this quantity as a generalization of the usual Belavkin-Staszewski relative entropy. We emphasize in advance that all the results are obtained for finite dimensional Hilbert spaces.
\end{abstract}

\maketitle

\bigskip
\bigskip

{\bf MSC 2010 Subject Classification.} Primary 47B49, 46N50.

{\bf Keywords and phrases.} {\it positive operators, R\'enyi relative entropy, quantum $f$-divergence, nonlinear preservers.}

\bigskip
\bigskip
	
\section{Introduction}

Relative entropy is one of the most important numerical quantity in quantum information theory. It is used as a measure of distinguishability between quantum states, or more generally, positive (either positive semidefinite or positive definite) operators. However, from the purely mathematical point of view any metric would do this job, for certain problems in quantum information theory (such as e.g. error correction or channel decoding) it is more beneficial to deal with some particular relative entropies and divergences.

The original concept of relative entropy
was generalized in many different ways in the past decades. The most extensively studied generalizations are the \emph{R\'enyi type relative entropies} and different \emph{quantum $f$-divergences}. In principle, for positive definite operators\footnote{For the sake of simplicity in the Introduction we take the freedom to define the corresponding divergences only on the set of positive definite operators. The extensions for general (not necessarily invertible) positive semidefinite operators are discussed in Section 2.} $A,B$ three different families of R\'enyi divergences are defined as follows. Let $\alpha \in]0,1[\cup]1,+\infty[$ be an arbitrary but fixed parameter. Then the \emph{(conventional) R\'enyi divergence} is the quantity
\[
D_{\alpha}(A\Vert B)=(\alpha-1)^{-1} \log \left((\tr A)^{-1}\tr A^{\alpha}B^{1-\alpha}\right),
\]
and the so-called \emph{sandwiched R\'enyi relative entropy} \cite{Muletal13} is
\[
D_{\alpha}^{*}(A\Vert B)=(\alpha-1)^{-1} \log \left((\tr A)^{-1} \tr \left(B^{\frac{1-\alpha}{2\alpha}}AB^{\frac{1-\alpha}{2\alpha}}  \right)^{\alpha}\right).
\]
In \cite{mosonyi} M. Mosonyi discovered and studied extensively the following version of R\'enyi divergences:
\[
D_{\alpha}^{\flat}(A\Vert B)= (\alpha-1)^{-1} \log\left((\tr A)^{-1} \tr \exp(\alpha \log A+(1-\alpha)\log B)\right)
\]
which plays an important role in classical-quantum channel decoding.
The expression $$ Q_{\alpha}(A\Vert B)=(\tr A)^{-1} \tr A^{\alpha}B^{1-\alpha}$$
appearing in the definition of $D_{\alpha}(.\Vert .)$ is a \emph{standard $f$-divergence} \cite{HMPB} on the set of density operators  corresponding to the function $f_{\alpha}(t)=\sign(\alpha -1)t^{\alpha}$. Motivated by the celebrated Wigner's theorem concerning the structure of quantum mechanical symmetry transformations (i.e. maps on pure states preserving the transition probability) in \cite{MNSz} (for any strictly convex function $f$) the second author and his coauthors determined the structure of transformations on the set of density operators leaving the standard quantum $f$-divergences invariant. In \cite{virosztek} D. Virosztek managed to determine the corresponding preservers on the cone of positive semidefinite operators.
In \cite{gaal} the first author and L. Moln\'ar described the symmetry transformations with respect to the sandwiched R\'enyi relative entropy.
By following this line of research, our first aim here is to describe the quantum R\'enyi divergence preservers in the case where the quantity $D_{\alpha}^{\flat}(.\Vert .)$ is considered.

As for the second, in the paper \cite{ruskai} D. Petz and M. B. Ruskai introduced for any operator convex function $f$ the numerical quantity
\[
D_f(A\Vert B)=\tr Bf(B^{-1/2}AB^{-1/2})
\]
from which one can recover the usual Belavkin-Staszewski relative entropy by putting the so-called standard entropy function $\eta(t)=t\log t$ in the place of $f$. In \cite{Matsumoto} K. Matsumoto pointed out that the quantity $D_f(.\Vert .)$ is maximal among the monotone quantum $f$-divergences, and it can be expressed in the form of a natural optimization of the $f$-divergences of classical distribution functions that can be mapped into the given quantum operators (see also \cite{HM}). Hence, following the terminology introduced by K. Matsumoto, we refer to the quantities $D_f(.\Vert .)$ as \emph{maximal $f$-divergences}. In \cite{banach} the first author\footnote{On the set of positive definite operators the case where $f$ is operator monotone decreasing has been already solved before by L. Moln\'ar in \cite{Sebestyen}.} determined the structure of those transformations on the cones of positive operators leaving maximal $f$-divergences invariant. However, in that paper the case where $\lim_{t\to\infty}f(t)/t<+\infty$ has not been handled yet on the set of positive semidefinite operators. Therefore, the second aim of the present paper is to determine the corresponding preservers in the missing case and in that way complete our former work.

As we have already mentioned, the current paper is a contribution to the authors' former works but for further relative entropy preserver problems on quantum states or cones of positive operators the interested reader can consult with the series of publications \cite{gaal}, \cite{elso}, \cite{Sebestyen}, \cite{pitrik}, \cite{bregman}, \cite{virosztek}. In fact, at several points we adopt some ideas and techniques developed in some of these latter articles, especially the one that reduces our original preserver problem to the description of the form of order automorphisms with respect to the usual or the chaotic order.

This paper is organized as follows. In the second section we briefly review the basic notions and some necessary preliminaries which will be used in the sequel. In Section 3 we formulate our main results, and the proofs are presented separately in Section 4. Finally, in Section 5 we answer some questions which are closely related to the problems discussed in Sections 3-4.

\section{Preliminaries}

\subsection{Notation}

Let $\cH$ be a complex finite dimensional Hilbert space with $d=\dim \cH \geq 2$. The symbols $\Lh$ and $\Lhs$ stand for the algebra of all linear operators and the linear space of Hermitian operators on $\cH$, respectively. The usual L\"owner order among the elements of $\Lhs$ is denoted by $\leq$, we write $A\leq B$ if and only if $\langle A x,x\rangle \leq \langle B x,x\rangle$ holds for every $x \in \cH$. The operator $A\in\Lh$ is said to be positive semidefinite if $A \geq 0$. The cones of positive semidefinite operators and positive definite (i.e. invertible positive semidefinite) operators on $\cH$ are denoted by $\Lhp$ and $\Lhpi$, respectively. The symbol $\mathcal{P}_1(\cH)$ stands for the set of all rank-one projections on $\cH$. For any vectors $x,y\in\cH$ the operator $x\otimes y\in\Lh$ is defined by the formula $(x\otimes y)(z)=\langle z,y\rangle x\ (z\in\cH)$. Elementary computations show that the elements of $\mathcal{P}_1(\cH)$ are exactly the operators of the form $x\otimes x$ with some unit vector $x\in\cH$, moreover $\tr A(x\otimes x)=\langle Ax,x\rangle$ holds for each element $A\in\Lh$.
We shall also use the following extension of the usual logarithmic function:
\[
\hat{\log}(t)=\left\{
       \begin{array}{ll}
        \log(t), & t>0\\
        0, & t=0.
       \end{array}
     \right.
\]
The chaotic order on $\Lhpi$ is denoted by $\ll$, for any $A,B\in\Lhpi$ we have $A\ll B$ if and only $\log A\leq \log B$. A map $\Lambda: \Lh \to \Lh$ is said to be positive if $\Lambda(\Lhp)\subseteq \Lhp$ holds. Moreover, $\Lambda$ is called \emph{completely positive} if
the mapping
\[
\begin{pmatrix}
A_{11} & \quad A_{12} & \quad \ldots  & \quad A_{1n}\\
A_{21} & \quad A_{22} & \quad \ldots  & \quad A_{2n}\\
\vdots & \quad \vdots & \quad \ddots & \quad \vdots \\
A_{n1} & \quad A_{n2} & \quad \ldots & \quad A_{nn}
\end{pmatrix}
\mapsto
\begin{pmatrix}
\Lambda(A_{11}) & \quad \Lambda(A_{12}) & \quad \ldots  & \quad \Lambda(A_{1n})\\
\Lambda(A_{21}) & \quad \Lambda(A_{22}) & \quad \ldots  & \quad \Lambda(A_{2n})\\
\vdots & \quad \vdots  & \quad \ddots & \quad \vdots \\
\Lambda(A_{n1}) & \quad \Lambda(A_{n2}) & \quad \ldots  & \quad \Lambda(A_{nn})
\end{pmatrix}
\]
is positive for all $n\in\mathbb{N}$. Here, the matrix entries are elements of $\Lh$.

\subsection{Operator monotone and convex functions}

Let us recall here some basic facts from the theory of operator monotone and convex functions which will be needed. A real-valued function $f$ on an interval $J$ is called operator monotone if for any two Hermitian operators $A\leq B$ on a complex Hilbert space with spectra in $J$ we have $f(A)\leq f(B)$. Such functions have strong analytical properties, namely, according to the celebrated L\"owner theorem every operator monotone function defined on some open interval has an analytic continuation on the upper complex half plane. In particular, L\"owner's theorem implies immediately that every non-constant operator monotone function is strictly monotone increasing, as well.

A function $f:[0,\infty[\to \mathbb{R}$ is said to be operator convex if the operator inequality
\[
f(tA+(1-t)B)\leq tf(A)+(1-t)f(B),\qquad t\in[0,1]
\]
is valid for any positive operators $A,B$ on a complex Hilbert space. A function $f$ is termed to be operator concave, if $-f$ is operator convex. In fact, there is a deep connection between operator monotonicity and concavity on the nonnegative real line. It might be folklore for the reader that every operator monotone function is operator concave, and conversely, every operator concave function which is bounded from below is necessarily operator monotone.

Another important result is Kraus' theorem \cite{Kraus} which tells us that a function defined on the nonnegative real line is operator convex if and only if the first order divided difference function
\[
R(t,s)=\frac{f(t)-f(s)}{t-s}
\]
is operator monotone in both of its variables. Combining this fact with the famous theorem of L\"owner we conclude that every non-affine operator convex function is strictly convex. Furthermore, operator monotone and convex functions have certain integral representations. In particular, the following holds:
if $f:[0,+\infty[\to\mathbb{R}$ is an operator convex function satisfying $\omega(f):=\lim_{t \to +\infty}f(t)/t<+\infty$, then there exists a unique positive Radon measure $\nu_f$ such that
\begin{equation} \label{repr}
f(t)=f(0)+\omega(f)t-\int_{]0,+\infty[} \frac{t}{t+s} d\nu_f(s)
\end{equation}
and $\int_{]0,+\infty[} \frac{d\nu_f(s)}{1+s}<+\infty$.

\subsection{R\'enyi type relative entropies}

In the Introduction we have introduced three families of R\'enyi type divergences for positive definite operators. Next we give the precise definitions of these previously discussed quantities for arbitrary (not necessarily invertible) positive semidefinite operators. If $A=0$, then we simply set $D_\alpha(A\Vert B)=D_\alpha^{*}(A\Vert B)=D_{\alpha}^{\flat}(A\Vert B)=-\infty$. In the case where $A\neq 0$ the corresponding R\'enyi divergences are defined as follows.
\begin{itemize}
    \item[i.)] {
    If $\supp A \not\perp \supp B$ and ($0<\alpha<1$ or $\supp A \subseteq \supp B$), then
\[
D_{\alpha}(A\Vert B) = (\alpha-1)^{-1} \log \left((\tr A)^{-1}\tr A^{\alpha}B^{1-\alpha}\right)
\]
Otherwise, $D_{\alpha}(A\Vert B)=+\infty$.
    }
    \item[ii.)] {
    If $\supp A \not\perp \supp B$ and ($0<\alpha<1$ or $\supp A \subseteq \supp B$), then
\[
D_{\alpha}^{*}(A\Vert B) =(\alpha-1)^{-1} \log \left((\tr A)^{-1} \tr \left(B^{\frac{1-\alpha}{2\alpha}}AB^{\frac{1-\alpha}{2\alpha}}  \right)^{\alpha}\right)
\]
Otherwise, $D_{\alpha}^{*}(A\Vert B)=+\infty$.
    }
    \item[iii.)] {
    If $0<\alpha<1$ or $\supp A \subseteq \supp B$, then
\[
D_{\alpha}^{\flat}(A\Vert B) =(\alpha -1)^{-1} \log\left((\tr A)^{-1}\tr \left(P\exp\left(\alpha P\hat{\log}(A)P+(1-\alpha)P\hat{\log}(B)P\right)\right)\right),
\]
where $P$ denotes the projection on $\cH$ onto $\supp A\:\cap\:\supp B$.
Otherwise, if $\supp A \nsubseteq \supp B$ and $\alpha > 1$, then $D_{\alpha}^{\flat}(A\Vert B)=+\infty$.
    }
\end{itemize}

We remark that for commuting operators $A,B$ all the above divergences coincide.

\subsection{Different quantum $f$-divergences}

In what follows we turn to the notions of quantum $f$-divergences.
First of all, we note that both of them are non-commutative generalizations of  Csisz\'ar's $f$-divergences \cite{csiszar} which were originally defined between probability distributions $p=(p_1, p_2, \ldots, p_d)$ and $q=(q_1, q_2, \ldots, q_d)$ as
\[
S_f(p\Vert q)=\sum_{i=1}^{d}q_i f\left(\frac{p_i}{q_i} \right)\qquad q_i \neq 0,\quad i=1,2,\ldots, d
\]
where $f:[0,+\infty[\to \mathbb{R}$ is any strictly convex function.
To discuss the generalizations, for commuting operators $C,D\in\Lhpi$ we introduce the so-called perspective function $m_f:\Lhpi \times \Lhpi \to \Lhpi$ defined by $m_f(C,D)=Df(D^{-1}C)$.
Furthermore, we introduce the left and right multiplication operators acting on $\Lh$:
\[
L_A: X\mapsto AX,\qquad R_B: X\mapsto XB
\]
It is apparent that if the operators $A,B$ are positive, then so are $L_A, R_B$. Furthermore, the operators $L_A,R_B$ commute.
For a fixed operator $K\in\Lh$ the general notion of \emph{quasi-entropies} was defined in \cite{petz} by D. Petz (see also \cite{quasi}) as
\begin{align*}
S_f^K(A\Vert B) &=  \langle m_f(L_A,R_B)K,K \rangle_{HS}  \\
&=  \langle f(L_AR_{B^{-1}})KB^{1/2},KB^{1/2} \rangle_{HS}, \qquad A\in\Lhp,\ B\in\Lhpi
\end{align*}
where $\langle . , . \rangle_{HS}$ stands for the usual Hilbert-Schmidt inner product.
The quasi-entropies corresponding to the operator $K=I$ are termed as \emph{standard $f$-divergences} and denoted by $S_f(.\Vert .)$. Essentially following  \cite[2.1 Definition]{HMPB} the above notion can be extended for general $A,B\in\Lhp$ by the formula
\[
S_f(A\Vert B):=\lim_{\varepsilon \searrow 0} S_f(A\Vert B+\varepsilon I)=\sum_{a\in\sigma(A)}\left(\sum_{b\in\sigma(B)\backslash\{0\}}
bf\left(\frac{a}{b}\right)\tr P_aQ_b+\omega(f) a\tr P_aQ_0\right)
\]
where the operators $A,B$ are given by their spectral resolutions as $A=\sum_{a\in\sigma(A)}aP_a$ and $B=\sum_{b\in\sigma(B)}bQ_b$. We remark that according to \cite[Proposition 2.3]{HMPB} the limit in the last displayed formula indeed exists whenever $f$ is supposed to be continuous.

As for \emph{maximal $f$-divergences}, D. Petz and M. B. Ruskai defined these quantities for an operator convex function $f$ and operators $A\in\Lhp$ and $B\in\Lhpi$ by the formula
\[
D_f(A\Vert B)=\tr Bf(B^{-1/2}AB^{-1/2})
\]
as a generalization of the the usual Belavkin-Staszewski relative entropy which corresponds to the function $\eta(t)= t\log t$. However, K. Matsumoto introduced this sort of divergences in a more operational way. A \emph{reverse test} of pairs of positive definite operators $A,B\in\Lhpi$ is a triple $(p,q,\Gamma)$ where $p,q$ are finite probability distributions, and $\Gamma:\mathbb{C}^{d}\to \Lhpi$ is a trace preserving linear map\footnote{Here, trace preserving property means that $\tr \Gamma(p)= \sum_{i=1}^{d} p_i$ holds for any probability distribution $p$.} with the property that $\Gamma(p)=A$ and $\Gamma(q)=B$ hold. Then for any operator convex function $f$ we set
\[
D_f(A\Vert B)=\inf \{S_f(p \Vert q): \quad (p,q,\Gamma) \mbox{ is a reverse test}  \}.
\]
According to \cite[Lemma 9]{Matsumoto} the limit $\lim_{\varepsilon\searrow 0}D_f(A\Vert B+\varepsilon I)$ exists for any $A,B\in\Lhp$, whence this is a proper way to define maximal $f$-divergences for the non-invertible case, similarly to the standard ones.

Let us see in what sense are these $f$-divergences maximal. In \cite{Matsumoto} it was shown that the divergence $D_f(.\Vert .)$ is the maximal among the quantities $D_f^Q(.\Vert .)$ which
satisfy the following two conditions:
\begin{itemize}
    \item[i.)]{
    If $A$ and $B$ are commuting density operators with eigenvalues (counted with multiplicities) $a_1, a_2, \ldots, a_d$ and $b_1, b_2, \ldots, b_d$, respectively, then for the probability vector $p=(a_1, a_2, \ldots, a_d)$ and $q=(b_1, b_2, \ldots, b_d)$ we have $D_f^Q(A\Vert B)=S_f(p\Vert q)$. This property can be interpreted as these divergences distinguish at the highest level two different non-commuting positive operators.
    }
    \item[ii.)]{
    The quantites $D_f^Q(.\Vert .)$ are non-increasing under completely positive trace preserving (CPTP) maps, i.e.,
    $D_f^Q(\Lambda(A)\Vert \Lambda(B))\leq D_f^Q(A\Vert B)$ holds for any  CTPT map $\Lambda$.
    }
\end{itemize}
For further details about the properties of maximal $f$-divergences (at several points in comparison with the standard ones) the reader should consult with the articles \cite{HM},\cite{Matsumoto}.

\section{Main results}

Now we are in a position to formulate the main results of the paper. In order to do so, first observe that if $f$ is a continuous real valued function, then apparently $f(UAU^*)=Uf(A)U^*$ holds for any unitary operator $U$ acting on $\cH$ and each element $A\in\Lhs$ with $\sigma(A)\subseteq\text{dom}\ f$. We conclude that unitary congruences leave the divergence $D_{\alpha}^{\flat}(.\Vert .)$ invariant. One can verify easily that the transposition and scalar multiplications (with positive scalars) are also transformations on the set of positive definite operators which preserve this quantity. Our first theorem states that there are no other such transformations: all those maps which leave the divergence $D_{\alpha}^{\flat}(.\Vert .)$ invariant can be obtained from the previously mentioned types by composition. The precise formulation of our structural result follows.

\begin{theorem} \label{T:1}
Let $\alpha \in ]0,1[\cup]1,+\infty[$ be an arbitrary but fixed real number and let $\phi:\Lhpi\to\Lhpi$ be a bijection satisfying
\[
D_{\alpha}^{\flat}(\phi(A)\Vert\phi(B))=D_{\alpha}^{\flat}(A \Vert B),\qquad A,B\in\Lhpi.
\]
Then there exist either a unitary or an antiunitary operator $U$ on $\cH$ and a positive scalar $\lambda > 0$ such that
\begin{equation}\label{unitary}
\phi(A)=\lambda UAU^{*},\qquad A,B\in\Lhpi.
\end{equation}
\end{theorem}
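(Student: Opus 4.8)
The plan is to linearize the problem by passing to logarithms. I define $\psi:\Lhs\to\Lhs$ by $\psi(X)=\log\phi(\exp X)$; since $\exp$ and $\log$ are mutually inverse bijections between $\Lhs$ and $\Lhpi$ and $\phi$ is bijective, $\psi$ is a bijection of $\Lhs$. Writing $Q(X,Y)=\tr\exp(\alpha X+(1-\alpha)Y)$ and noting that $\tr e^{X}=Q(X,X)$, the invariance hypothesis becomes the functional equation
\[
\frac{Q(\psi(X),\psi(Y))}{Q(\psi(X),\psi(X))}=\frac{Q(X,Y)}{Q(X,X)},\qquad X,Y\in\Lhs,
\]
because $(\alpha-1)^{-1}\log$ is injective. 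Since $\phi^{-1}$ satisfies the same hypothesis, $\psi^{-1}$ satisfies the same equation.

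First I would show that $\psi$ is an order automorphism for the usual L\"owner order on $\Lhs$ (equivalently, $\phi$ is a chaotic-order automorphism of $\Lhpi$). The key is the order-detection lemma: for $0<\alpha<1$ one has $Y_1\le Y_2$ if and only if $Q(X,Y_1)\le Q(X,Y_2)$ for all $X\in\Lhs$, with the inequality on the right reversed when $\alpha>1$. The forward implication follows from the strict monotonicity of $Z\mapsto\tr e^{Z}$ (its derivative along a direction $D\ge 0$, $D\neq0$, is $\tr(De^{Z})>0$) together with the fact that $Y\mapsto\alpha X+(1-\alpha)Y$ is order preserving or reversing according to the sign of $1-\alpha$. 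For the converse I would argue by contraposition: if $Y_1\not\le Y_2$, pick a unit vector $v$ with $\langle(Y_2-Y_1)v,v\rangle<0$ and test with $X=tP$, $P=v\otimes v$; as $t\to+\infty$ the leading eigenvalue of $\alpha tP+(1-\alpha)Y_i$ is $\alpha t+(1-\alpha)\langle Y_iv,v\rangle+o(1)$, so the ratio $Q(tP,Y_1)/Q(tP,Y_2)$ tends to a limit on the wrong side of $1$. Feeding $X=\psi(X')$ into the functional equation and using surjectivity of $\psi$ transports this characterization through $\psi$, and the analogous argument for $\psi^{-1}$ shows $\psi$ and its inverse are order preserving.

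Next I would invoke the known description of order automorphisms of $\Lhs$: every such map has the form $\psi(X)=TXT^{*}+C$ with $T$ an invertible linear or conjugate-linear operator on $\cH$ and $C\in\Lhs$. Substituting this into the functional equation and using $\alpha\psi(X)+(1-\alpha)\psi(Y)=T(\alpha X+(1-\alpha)Y)T^{*}+C$, and observing that as $Y$ varies the operator $W=\alpha X+(1-\alpha)Y$ ranges over all of $\Lhs$ independently of $X$ (here $\alpha\neq1$ is essential), the equation collapses to
\[
\tr\exp(TWT^{*}+C)=\kappa\,\tr\exp(W),\qquad W\in\Lhs,
\]
for a single positive constant $\kappa$.

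Finally I would extract the rigidity. Testing $W=sI$ gives $\tr\exp(s\,TT^{*}+C)=\kappa d\,e^{s}$; dividing by $e^{s}$ and sending $s\to\pm\infty$ forces the largest and smallest eigenvalues of the positive operator $TT^{*}$ to equal $1$, whence $TT^{*}=I$ and $T=:U$ is unitary or antiunitary. Then $\tr\exp(TWT^{*}+C)=\tr\exp(W+U^{*}CU)$, and testing $W=tP$ with $P=v\otimes v$ and letting $t\to+\infty$ yields $\langle U^{*}CU\,v,v\rangle=\log\kappa$ for every unit vector $v$, so $U^{*}CU=(\log\kappa)I$ and $C=(\log\kappa)I$ is scalar. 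Hence $\psi(X)=UXU^{*}+(\log\kappa)I$, and undoing the logarithm gives $\phi(A)=\exp(U(\log A)U^{*}+(\log\kappa)I)=\kappa\,UAU^{*}$, the asserted form with $\lambda=\kappa$. I expect the main obstacle to be the converse half of the order-detection lemma—the asymptotic eigenvalue analysis that turns a strict failure of the L\"owner inequality into a strict inequality of the divergence—while the cleanest external input is the structure theorem for order automorphisms.
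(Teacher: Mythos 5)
Your proposal is correct and follows essentially the same route as the paper: the order-detection lemma (proved by the same kind of asymptotic analysis with rank-one-projection test operators that the paper isolates in its Lemma 1) shows that $\phi$ is a chaotic-order automorphism --- your L\"owner-order automorphism of $\Lhs$ is exactly this, read through the logarithm --- after which Moln\'ar's structure theorem for order automorphisms and a rigidity analysis finish the job. Your rigidity step is a mildly streamlined version of the paper's (the two-sided limit $s\to\pm\infty$ in $\tr\exp(sTT^{*}+C)=\kappa d\,e^{s}$ replaces the paper's polar-decomposition reduction together with a separate appeal to $\phi^{-1}$), but the overall decomposition of the argument and its key external input are identical.
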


As every antiunitary congruence transformation is a composition of the transposition and a unitary congruence transformation our result indeed yields the conclusion stated above.
We remark that the same result remains true also in the case where $\phi$ acts on the whole set $\Lhp$ instead $\Lhpi$. A sketchy proof of the statement is also given in Section 4.

In virtue of our first theorem a similar structural result might be expected concerning the symmetry transformations on $\Lhp$ with respect to maximal $f$-divergences. It turns out that the structure of the corresponding symmetries is much simpler: these maps are exactly the unitary-antiunitary congruence transformations.

\begin{theorem}\label{T:2}
Assume that $f:[0,+\infty[\to\RR$ is a non-affine operator convex function with the property $\lim_{t \to +\infty}f(t)/t < +\infty$. If $\phi:\Lhp\to\Lhp$ is a bijective map satisfying
\[
D_{f}(\phi(A)\Vert\phi(B))=D_{f}(A\Vert B),\qquad A,B\in\Lhp
\]
then there exists either a unitary or an antiunitary operator $U$ on $\cH$ such that
\[
\phi(A)= UAU^*,\qquad A\in\Lhp.
\]
\end{theorem}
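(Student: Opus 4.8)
The plan is to follow the route advertised in the introduction and reduce the problem to the description of the order automorphisms of the cone $\Lhp$ with respect to the L\"owner order, so I first extract from $D_f$ a monotone, order-sensitive quantity. Applying the integral representation \eqref{repr} to $X=B^{-1/2}AB^{-1/2}$ and using $(X+sI)^{-1}=B^{1/2}(A+sB)^{-1}B^{1/2}$, a short computation gives, for $A\in\Lhp$ and $B\in\Lhpi$,
\begin{equation*}
D_f(A\Vert B)=f(0)\tr B+\omega(f)\tr A-\int_{]0,+\infty[}\frac1s\,\tr\big(A:(sB)\big)\,d\nu_f(s),
\end{equation*}
where $A:C=A(A+C)^{-1}C$ denotes the parallel sum. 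Two features of this formula drive the proof. First, the parallel sum is jointly monotone increasing in the L\"owner order, hence so is the functional $G(A,B):=\int_{]0,+\infty[}s^{-1}\tr(A:(sB))\,d\nu_f(s)$; moreover $\nu_f\neq0$ (otherwise $f$ would be affine), so $G$ is in fact strictly monotone on the interior. Second, since $f$ is non-affine, the three constants $f(0),\,f(1),\,\omega(f)$ cannot all vanish.

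Next I would recover the trace and thereby isolate the order-sensitive part $G$. When $f(1)\neq0$ this is immediate from $D_f(A\Vert A)=f(1)\tr A$; in the remaining degenerate cases one uses the boundary limits $D_f(A\Vert B)\to f(0)\tr B$ as $A\searrow0$ and $D_f(A\Vert B)\to\omega(f)\tr A$ as $B\searrow0$ (both because $A:(sB)$ is dominated by $A$ and by $sB$), at least one coefficient being nonzero. Once $\phi$ is known to preserve the trace it also preserves $G$, since $D_f=f(0)\tr B+\omega(f)\tr A-G$, and the central task becomes to show that $\phi$ is an order automorphism of $(\Lhp,\leq)$: the strict joint monotonicity of $G$ lets one characterise the relation $A\leq B$ intrinsically through comparisons of the values of $G$, and this characterisation is then transported by $\phi$.

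With the order-automorphism property in hand, I would invoke the known structural description of the L\"owner-order automorphisms of the positive semidefinite cone: every such bijection has the form $A\mapsto TAT^*$ or $A\mapsto TA^{\mathsf T}T^*$ for some invertible operator $T$. Substituting this back into $D_f(\phi(A)\Vert\phi(B))=D_f(A\Vert B)$ and using trace preservation through the nonvanishing coefficient among $f(0),f(1),\omega(f)$ forces $\tr(TBT^*)=\tr B$ for every $B\in\Lhp$, that is $T^*T=I$; hence $T$ is unitary and $\phi(A)=UAU^*$ with $U$ unitary, respectively antiunitary in the transpose case. Note that, in contrast to Theorem \ref{T:1}, no positive scalar survives: $D_f$ is homogeneous of degree one rather than scale invariant when $f$ is non-affine, so a genuine dilation cannot be a symmetry and the congruence is forced to be isometric.

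The hardest part will be the order step of the second paragraph carried out across the boundary $\partial\Lhp$ of rank-deficient operators -- precisely the case left open in \cite{banach}. There $B^{-1/2}$ is undefined and $D_f(A\Vert B)$ is accessible only through the limit $\lim_{\varepsilon\searrow0}D_f(A\Vert B+\varepsilon I)$, so one must show that $G$ extends continuously to $\partial\Lhp$, that it still detects the L\"owner order there (including the degenerate comparisons between operators of differing supports), and that $\phi$ maps the boundary onto itself compatibly with this extension. Establishing this robust boundary behaviour of the monotone functional $G$, rather than the essentially algebraic pin-down of the last paragraph, is where the real work lies.
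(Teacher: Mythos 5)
Your opening decomposition is exactly the one the paper uses: your parallel-sum integral $\int_{]0,+\infty[}s^{-1}\tr(A:(sB))\,d\nu_f(s)$ is precisely $\tr B\sigma_{h_f}A$ for the Kubo--Ando mean generated by $h_f(t)=\int t/(t+s)\,d\nu_f(s)$, and the reduction of the problem to trace preservation plus preservation of this mean-trace functional is the paper's strategy as well. After that point, however, your route diverges and has two genuine gaps.

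First, the trace-recovery step in the degenerate cases is not executable as you describe it. When $f(1)=0$ you propose to ``use the boundary limits'' $D_f(A\Vert B)\to f(0)\tr B$ as $A\searrow 0$ and $D_f(A\Vert B)\to\omega(f)\tr A$ as $B\searrow 0$; but $\phi$ is only a bijection with the preserver property, with no continuity whatsoever, so limits cannot be transported through $\phi$. To exploit $D_f(0\Vert B)=f(0)\tr B$ or $D_f(A\Vert 0)=\omega(f)\tr A$ you must first prove $\phi(0)=0$, and that requires an \emph{intrinsic} characterization of $0$ expressible purely in terms of $D_f$-values. This is what the paper supplies: when $f(0)\neq 0$ it characterizes $0$ as the unique $A$ for which the triangle-type inequality $D_f(B\Vert C)\leq D_f(B\Vert A)+D_f(A\Vert C)$ holds for all $B,C$ (the converse direction using strict convexity of $f$ via the divided-difference function), and when $f(0)=f(1)=0$ it bypasses $\phi(0)$ altogether by proving $\inf_{A}D_f(A\Vert B)=c\tr B$ with $c=\min f<0$, an identity manifestly preserved by $\phi$. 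Your observation that $f(0),f(1),\omega(f)$ cannot all vanish is correct, but by itself it does not produce a preserved characterization.

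Second, and more seriously, the central step of your argument --- that the functional $G$ characterizes the L\"owner order on all of $\Lhp$, so that $\phi$ becomes an order automorphism --- is asserted but not proved, and you yourself flag it as ``where the real work lies.'' Joint monotonicity of the parallel sum gives only the easy implication ($A\leq A'$ forces $G(A,C)\leq G(A',C)$ for all $C$); the converse needs a concrete separating family of test operators $C$, in the spirit of Lemma~\ref{L:1}, and on the boundary the strictness you invoke degenerates (e.g.\ $A:(sB)=0$ whenever $\supp A\perp\supp B$), which is exactly the regime left open in \cite{banach}. The paper avoids this entirely: once trace preservation is in hand, $\phi$ preserves $\tr A\sigma_{h_f}B$ on $\Lhp$, and the structure theorem of Moln\'ar and Szokol \cite[Theorem 2]{integralequ} for bijections preserving such mean-trace functionals is applied directly, with no order-automorphism step and no boundary analysis of $G$. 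Your concluding paragraph (pinning $T$ down to a unitary via trace preservation on rank-one projections) is fine, but as it stands the proposal is missing its load-bearing middle.
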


We remark that the divergence corresponding to the affine function $f(x)=ax+b$ is $D_f(A\Vert B)=a\tr A+b\tr B$ which is not a proper measure of dissimilarity because it depends only on the trace of operators. Therefore, such quantities are ruled out of our investigations.

\section{Proofs}

This section is devoted to the proofs of our results formulated in the previous section. First, we present an auxiliary lemma on which we rely heavily in the sequel. We use the following notation in it. Let $N\colon\Lh\to\mathbb{R}$ be an arbitrary but fixed unitary invariant norm and $g\colon\mathbb{R}\to]0,\infty[$ be a strictly increasing bijection. Since the elements of $\mathcal{P}_1(\cH)$ are unitary similar to each other, we observe that the function $N(.)$ is constant on $\mathcal{P}_1(\cH)$. Let us denote this constant with $c_N$.

\begin{lemma}\label{L:1}
Let $A\in \Lhs$ and $x\in \cH$ be such that $\Vert x\Vert=1$. Then for the rank-one projection $P=x\otimes x$ we have
\[
\lim\limits_{t\to +\infty}N(g(A+t(P-I)))=c_N g(\langle Ax,x\rangle).
\]
\end{lemma}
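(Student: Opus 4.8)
The plan is to analyze the spectral behaviour of the self-adjoint operator $M_t:=A+t(P-I)$ as $t\to+\infty$, push it through the functional calculus with $g$, and finally exploit the unitary invariance of $N$ together with the continuity of norms on the finite-dimensional space $\Lh$. The underlying picture is that multiplying by $t$ drives the operator to $-\infty$ in every direction orthogonal to $x$, while the single direction $x$ (on which $P-I$ acts as $0$) survives and retains the value $\langle Ax,x\rangle$.

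First I would describe the eigenvalues of $M_t$. Since $(P-I)x=0$ and $(P-I)y=-y$ for every $y\perp x$, the perturbation $t(P-I)$ has eigenvalue $0$ on $\mathbb{C}x$ and $-t$ with multiplicity $d-1$ on $x^{\perp}$. I claim that, ordering the eigenvalues decreasingly, $\lambda_1(M_t)\to\langle Ax,x\rangle$ while $\lambda_2(M_t)\geq\cdots\geq\lambda_d(M_t)$ all tend to $-\infty$. The divergence of the lower eigenvalues follows from Weyl's inequality: since $\lambda_2(t(P-I))=-t$, one gets $\lambda_2(M_t)\leq\lambda_{\max}(A)-t\to-\infty$, and the remaining eigenvalues are dominated by $\lambda_2(M_t)$. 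For the top eigenvalue I would use the variational characterization $\lambda_1(M_t)=\max_{\|v\|=1}\left(\langle Av,v\rangle+t(|\langle v,x\rangle|^2-1)\right)$. Testing with $v=x$ gives the lower bound $\lambda_1(M_t)\geq\langle Ax,x\rangle$; conversely, if $v_t$ is a unit maximizer, then the non-positivity of the second summand forces $t(1-|\langle v_t,x\rangle|^2)\leq\langle Av_t,v_t\rangle-\langle Ax,x\rangle\leq 2\|A\|$, so $|\langle v_t,x\rangle|\to 1$, whence $\langle Av_t,v_t\rangle\to\langle Ax,x\rangle$. Squeezing $\langle Ax,x\rangle\leq\lambda_1(M_t)\leq\langle Av_t,v_t\rangle\to\langle Ax,x\rangle$ then yields $\lambda_1(M_t)\to\langle Ax,x\rangle$.

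Next I would pass through $g$ via the spectral theorem. Writing $M_t=\sum_i\lambda_i(M_t)P_i(t)$ with pairwise orthogonal spectral projections, we have $g(M_t)=\sum_i g(\lambda_i(M_t))P_i(t)$. Since $g:\mathbb{R}\to]0,\infty[$ is an increasing bijection, $\lim_{s\to-\infty}g(s)=0$, so $g(\lambda_i(M_t))\to 0$ for each $i\geq 2$; as the projections have norm at most one and are finite in number, the tail $R_t:=\sum_{i\geq 2}g(\lambda_i(M_t))P_i(t)$ tends to $0$ in operator norm, hence $N(R_t)\to 0$ by the equivalence of norms on $\Lh$. Moreover, because $\lambda_1(M_t)$ stays bounded while $\lambda_2(M_t)\to-\infty$, there is a spectral gap for large $t$, so $\lambda_1(M_t)$ is simple and $P_1(t)$ is a rank-one projection.

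Finally I would assemble the estimate. For large $t$ we have $P_1(t)\in\mathcal{P}_1(\cH)$ and $g>0$, so the unitary invariance of $N$ gives $N\big(g(\lambda_1(M_t))P_1(t)\big)=g(\lambda_1(M_t))\,c_N$. Combining this with the triangle inequality,
\[
\big|N(g(M_t))-g(\lambda_1(M_t))\,c_N\big|\leq N(R_t)\to 0,
\]
and the continuity of $g$ at $\langle Ax,x\rangle$ then yields $N(g(M_t))\to c_N\,g(\langle Ax,x\rangle)$, as desired. The only genuinely delicate point is the spectral asymptotics of the first step — establishing that precisely one eigenvalue remains bounded with the correct limit $\langle Ax,x\rangle$ while all others escape to $-\infty$; once this is secured, the functional-calculus and norm-continuity steps are routine.
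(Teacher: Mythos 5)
Your proof is correct, but it follows a genuinely different route from the paper's. You compute the asymptotic spectral decomposition of $M_t=A+t(P-I)$ directly: the variational characterization plus Weyl's inequality show that exactly one eigenvalue survives and converges to $\langle Ax,x\rangle$ while the rest escape to $-\infty$; you then split $g(M_t)$ into a rank-one top piece and a remainder that vanishes in operator norm (using that $g(s)\to 0$ as $s\to-\infty$, since $g$ is an increasing bijection onto $]0,\infty[$), and finish with the triangle inequality, homogeneity, and finite-dimensional norm equivalence. The paper instead sandwiches $A$ between two operators of the form $\alpha P+\beta(I-P)$ via a Sylvester's-criterion computation on leading principal minors, establishes that $S\le T$ implies $N(g(S))\le N(g(T))$ (Weyl's monotonicity theorem combined with the fact that unitarily invariant norms are monotone in singular values, citing Stewart--Sun), and then squeezes. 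Your argument buys a more self-contained proof: it needs only the triangle inequality and unitary similarity invariance of $N$, avoiding both the minor computation and the singular-value-domination theorem; the small price is having to justify the simplicity of the top eigenvalue for large $t$ and to track spectral projections that vary with $t$, which you do correctly. The paper's squeeze avoids any discussion of spectral projections at the cost of importing the monotonicity machinery for unitarily invariant norms. Both are complete; the one detail you should make explicit if you write this up is that a strictly increasing bijection $g:\mathbb{R}\to]0,\infty[$ is automatically continuous (a monotone function with a jump could not be surjective onto an interval), which you use when passing from $\lambda_1(M_t)\to\langle Ax,x\rangle$ to $g(\lambda_1(M_t))\to g(\langle Ax,x\rangle)$.
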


\begin{proof}
The main step of the proof is to show that there exists a number $K>0$ such that for all scalars $t\ge K$ we have
\begin{equation}\label{E:est}
\begin{gathered}
\left(\langle Ax,x\rangle-t^{-\frac1{2}}\right)P+t(P-I)\le A\le\left(\langle Ax,x\rangle+t^{-\frac1{2}}\right)P+\frac t{2}(I-P).
\end{gathered}
\end{equation}
We verify only the first inequality, the second can be proved in a similar way. To do this, let $t>0$ be a number and define
\[
T(t):=A-\left(\left(\langle Ax,x\rangle-t^{-\frac1{2}}\right)P+t(P-I)\right).
\]
Fix an orthonormal basis in $\cH$ containing $x$. Then the off-diagonal entries of the matrix of $T(t)$ with respect to this basis are constants, the first entry of the diagonal is $t^{-1/2}$ and the $i$th one is of the form "$t$+constant" $(i=2,\ldots,n)$. It is apparent that the first leading principal minor $\Delta_1(t)$ of the matrix of $T(t)$ is positive. Now let $j\in\{2,\ldots,n\}$. It is easy to check that the $j$th leading principal minor $\Delta_j(t)$ of the matrix of $T(t)$ is a linear combination of powers of $t$ with maximal exponent $j-3/2$ and the coefficient of $t^{j-3/2}$ is 1. It follows that $\lim\limits_{t\to +\infty}\Delta_j(t)=+\infty$ and thus for large enough $t$ we have $\Delta_j(t)>0$. We infer that for large enough $t$ this inequality holds for all $j\in\{1,\ldots,n\}$ which, by Sylvester's criterion, implies the existence of a scalar $K>0$ satisfying that for all $t\ge K$ the matrix of $T(t)$ is positive definite. We easily conclude that the first inequality in \eqref{E:est} is valid for each $t\ge K$.

Next observe that if $S,T\in\Lhs$ are operators such that $S\le T$, then we necessarily have $N(g(S))\le N(g(T))$.
Indeed, in that case by Weyl's monotonicity theorem (c.f., e.g. \cite[Corollary III.2.3.]{B}) $\lambda_i\le\mu_i,$ where $\lambda_i$ and $\mu_i$ denote the $i$-th largest eigenvalue $(i=1,\ldots,d)$ of $S$ and $T$, respectively. Thus, as $g$ is increasing it follows that $g(\lambda_i)$ and $g(\mu_i)$ are the $i$th largest singular value of the positive operator $g(S)$ and $g(T)$, respectively, and $g(\lambda_i)\le g(\mu_i)\ (i=1,\ldots,d)$. Referring to \cite[Theorem 3.7]{SS}, we infer that $N(g(S))\le N(g(T))$. Having this in mind, by \eqref{E:est}, we get that for any $t\ge K$ we have
\begin{equation}\label{E:est1}
\begin{gathered}
N\left(g\left(\left(\langle Ax,x\rangle-t^{-\frac1{2}}\right)P+2t(P-I)\right)\right)\le N\left(g\left(A+t(P-I)\right)\right)\\
\le N\left(g\left(\left(\langle Ax,x\rangle+t^{-\frac1{2}}\right)P+\frac t{2}(P-I)\right)\right).
\end{gathered}
\end{equation}
We compute
\[
g\left(\left(\langle Ax,x\rangle-t^{-\frac1{2}}\right)P+2t(P-I)\right)=g\left(\langle Ax,x\rangle-t^{-\frac1{2}}\right)P+g(-2t)(I-P)
\]
and
\[
\begin{gathered}
g\left(\left(\langle Ax,x\rangle+t^{-\frac1{2}}\right)P+\frac t{2}(P-I)\right)
=g\left(\langle Ax,x\rangle+t^{-\frac1{2}}\right)P+g\left(-\frac t{2}\right)(I-P).
\end{gathered}
\]
By the properties of $g$, it is very easy to check that both of the latter expressions tend to $g(\langle Ax,x\rangle)P$ as $t$ tends to infinity. This gives us that the limit of both sides of \eqref{E:est1} at infinity is $$N(g(\langle Ax,x\rangle)P)=c_N g(\langle Ax,x\rangle),$$ whence taking the limit $t\to +\infty$ in \eqref{E:est1} yields the desired conclusion.
\end{proof}

We are now in a position to present the proof of our first main result.

\begin{proof}[Proof of Theorem~\ref{T:1}]

Let $\tilde{\exp}\colon[-\infty,\infty]\to[0,\infty]$ be the unique continuous extension of the function exp to $[-\infty,\infty]$. Observe that the preservation of the quantity $D_{\alpha}^{\flat}(.\Vert .)$ is equivalent to that of the quantity
\[
Q_{\alpha}^{\flat}(A\Vert B)=\tilde{\exp}((\alpha-1)D_{\alpha}^{\flat}(A\Vert B)), \qquad A,B\in \Lhp.
\]
Note that
\[
Q_{\alpha}^{\flat}(A\Vert B)=(\tr A)^{-1} \tr \exp(\alpha \log A+(1-\alpha)\log B), \qquad A,B\in \Lhpi.
\]
We intend to show now the validity of the following characterization of the chaotic order on $\Lhpi$ depending on the value of $\alpha$: if $0<\alpha <1$, then for any $C,B\in \Lhpi$ we have
\[
B\ll C \iff Q_{\alpha}^{\flat}(A\Vert B)\leq Q_{\alpha}^{\flat}(A\Vert C), \qquad A\in\Lhpi.
\]
Otherwise
\[
B\ll C \iff Q_{\alpha}^{\flat}(A\Vert B)\geq Q_{\alpha}^{\flat}(A\Vert C), \qquad A\in\Lhpi.
\]
We prove this only in the case where $0<\alpha <1$ (the case $\alpha > 1$ requires only very trivial modifications).

First assume that $B\ll C$, i.e., $\log B \leq \log C$ holds. Then for all $A\in\Lhpi$ we have $\alpha \log A + (1-\alpha) \log B \leq \alpha\log A + (1-\alpha) \log C $. The monotonicity of the function $X\mapsto \tr \exp(X)$ on $\Lhs$ implies that $\tr \exp\left(\alpha \log A + (1-\alpha) \log B\right) \leq \tr \exp\left(\alpha\log A + (1-\alpha) \log C\right) $. Dividing both sides of the inequality by $\tr A$ yields the necessity.

As for the sufficiency, plug $\exp(t(P-I))$ in the place of $A$ in the inequality displayed last but one, where $P=x\otimes x$ is any rank-one projection ($x\in\cH$). By the spectral mapping theorem, we see that the eigenvalues (counted with multiplicity) of the operator $\exp(t(P-I))$ are $1,e^{-t},\ldots,e^{-t}$ and thus
$\tr{\exp(t(P-I))}=1+(d-1)e^{-t}$. Then by Lemma~\ref{L:1} we compute
\[
\begin{gathered}
\lim_{t\to +\infty} Q_{\alpha}^{\flat}(\exp(t(P-I))\Vert B)= \\
\lim_{t\to +\infty} \frac{\tr \exp(\alpha t(P-I)+(1-\alpha) \log B))}{1+(d-1)e^{-t}}=\exp\left(\langle (1-\alpha) (\log B) x,x \rangle\right)
\end{gathered}
\]
and this remains true if we replace $B$ by $C$. Therefore for $A=\exp(t(P-I))$ the inequality in question implies that $\langle (\log B) x,x \rangle \leq \langle (\log C) x,x \rangle$. Since this holds for any unit vector $x\in\cH$, we infer that $\log B \leq \log C$, i.e., $B\ll C$, as we have desired.

By the characterization above, the bijective map $\phi$ is an order automorphism on $\Lhpi$ with respect to the chaotic order. The structure of such transformations is described in \cite{order}. It follows from \cite[Theorem 2]{order} that there exist an invertible either linear or conjugate linear operator $T$ and a Hermitian operator $H$ on $\cH$ such that $\phi$ is of the form
\begin{equation}\label{form}
\phi(A)=\exp(T(\log A)T^{*}+H),\qquad A\in\Lhpi.
\end{equation}

Next we show that the operator $H$ in \eqref{form} is a scalar multiple of the identity.
To verify this, referring to \eqref{form} and the preserver property of $\phi$, one has that
\begin{equation}\label{E:pres1}
\frac{\tr \exp((1-\alpha)T(\log B)T^*+H))}{\tr \exp H }=\frac{1}{d} \tr \exp((1-\alpha)(\log B)),\quad B\in \Lhpi.
\end{equation}
Let $C$ be any Hermitian operator. By plugging $\exp(T^{-1}C(T^*)^{-1})$ into the place of $B$ in \eqref{E:pres1}, we get that for the operator $S=T^{-1}$ one has
\[
\left(\frac{d}{\tr \exp H}\right)\tr \exp((1-\alpha)C+H)=\tr \exp((1-\alpha)SCS^*),\quad C\in \Lhs.
\]
Now let $x\in \cH$ be a unit vector, $t\in\mathbb{R}$ be an arbitrary positive or negative real number depending on whether $0<\alpha<1$ or $\alpha>1$. Set $P=x\otimes x$ and define $q=(1-\alpha)t$. By inserting $C=t(P-I)$ in the above displayed equality and applying Lemma~\ref{L:1}, we get that
\[
\begin{gathered}
\lim\limits_{q\to +\infty}\tr\exp(qS(P-I)S^*)=\left(\frac{d}{\tr \exp H}\right)\lim\limits_{q\to +\infty}\tr \exp(q(P-I)+H)\\
= \left(\frac{d}{\tr \exp H}\right)\exp(\langle Hx,x\rangle).
\end{gathered}
\]
It is apparent that the kernel of the positive operator $S(I-P)S^*$ is one-dimensional. It follows from the spectral mapping theorem that the eigenvalues of $\exp(qS(P-I)S^*)$ (counted with multiplicity) are $(1,\exp(q\mu_2),\ldots,\exp(q\mu_{d}))$ with some negative numbers $\mu_j<0$. Hence we have that
\[
\lim\limits_{q\to +\infty}\tr\exp(qS(P-I)S^*)=\lim\limits_{q\to +\infty}\left(1+\sum_{j=2}^{d}\exp(q\mu_j)\right)=1.
\]
By the last two displayed equalities $\exp(\langle Hx,x\rangle)=(\tr \exp H)/d$ holds for any unit vector $x\in\cH$, implying that the operator $H$ is a scalar multiple of the identity. In addition, we have
\begin{equation} \label{form2}
\phi(A)=\lambda\exp(T(\log A)T^*),\qquad A\in\Lhpi
\end{equation}
with some positive real number $\lambda>0$.

In what follows we show that the operator $T$ in \eqref{form2} is either unitary or antiunitary.
We assume that $T$ is linear (the case where $T$ is conjugate linear can be treated similarly). Consider the polar decomposition $T=U\left|T\right|$ of $T$ where $U$ is a unitary operator on $\cH$ and $\left|T\right|=\sqrt{T^*T}$ is a positive definite operator. Since
\[
\phi(A)=\exp(U\left|T\right|\log(A)\left|T\right|U^{*}+H)=U\exp(\left|T\right|\log(A)\left|T\right|+U^{*}HU)U^{*},\qquad A\in\Lhpi
\]
and the divergence $D_{\alpha}^{\flat}(.\Vert .)$ is invariant under unitary congruence transformations, without serious loss of generality we may and do assume that in \eqref{form2} we have $T\in\Lhpi$.

Using \eqref{form2} and the preserver property of $\phi$ we infer that
\begin{equation} \label{preserver}
\begin{split}
\frac{\tr \exp(\alpha T(\log A)T +(1-\alpha)T(\log B)T+(\log \lambda) I)}{\tr \lambda \exp(T(\log A)T)}=
\frac{\tr \exp(\alpha\log A +(1-\alpha)\log B)}{\tr A}
\end{split}
\end{equation}
holds for all $A,B\in\Lhpi$. Let $A=I$ and $B=tI$ with some real number $t>0$. We obtain from \eqref{preserver} that
\begin{equation} \label{plug}
\left(\frac{1}{d} \right)\tr \exp((1-\alpha)(\log t)T^2)=\left(\frac{1}{d}\right)\tr \exp((1-\alpha)(\log t)I )=t^{1-\alpha}.
\end{equation}
Suppose that $T\neq I$. Then $T$ has an eigenvalue $\mu$ which is different from $1$. Let $P_{\mu}$ be a rank-one projection on $\cH$ whose range is contained in the eigenspace of $T$ corresponding to $\mu$. Apparently, we have $\mu^2 P_{\mu}\leq T^2$.

First assume that $\mu > 1$ holds.
Then clearly for any $t>1$, we have
$$(1-\alpha)(\log t)\mu^2 P_{\mu}\leq (1-\alpha)(\log t)T^2$$ in the case where $0<\alpha<1$. Otherwise we deduce the above displayed inequality for $0<t<1$.
In both cases, by the monotonicity of the function $X\mapsto \tr \exp(X)$ on $\Lhs$, we have
\[
\tr \exp((1-\alpha)(\log t)\mu^2 P_{\mu}) \leq \tr \exp((1-\alpha)(\log t)T^2).
\]
Comparing this with \eqref{plug} yields
\[
\frac{\left(t^{1-\alpha}\right)^{\mu^2}+(d-1)}{d} \leq t^{1-\alpha}.
\]
Letting $t$ tend to infinity or zero depending on whether $0<\alpha<1$ or $\alpha>1$ in the above displayed inequality we obtain a contradiction because $\mu>1$. Consequently, all the eigenvalues of the operator $T$ are less than or equal to $1$.
Clearly, the transformation $\phi^{-1}(.)$ possesses the same preserver property as $\phi(.)$ and we have
\[
\phi^{-1}(A)=\exp\left(T^{-1}(\log((1/\lambda)A)T^{-1}\right).
\]
Therefore, in a similar fashion we get that all the eigenvalues are greater than or equal to $1$. The proof is complete.
\end{proof}

As we have already mentioned in the previous section, Theorem~\ref{T:1} remains valid in the case where $\phi$ acts not only on $\Lhpi$ but on the whole set $\Lhp$. To see this, we can proceed as follows.

\begin{proof}[Continuation of the proof when $\phi$ acts on $\Lhp$]

Our first aim is to show that the transformation $\phi$ preserves the rank of operators.
According to the value of $\alpha$ we divide the proof into two cases.

CASE I. We assume that $0<\alpha<1$.
In such a case we conclude that $\phi(0)=0$. Indeed, it follows from the following characterization of the element $0\in\Lhp$. Let $A\in\Lhp$. Then $A=0$ if and only if $Q_{\alpha}^{\flat}(A\Vert B)=\infty$ for every $B\in\Lhp$.

Clearly, for every non-zero $A,B\in\Lhp$ we have $Q_{\alpha}^{\flat}(A \Vert B)=0$ if and only if $\supp{A}\cap\supp{B}=\{0\}$.
Therefore, the assumption in the theorem implies that for any $A,B\in\Lhp$ we have
\[
\supp{A}\cap\supp{B}=\{0\} \iff \supp{\phi(A)}\cap\supp{\phi(B)}=\{0\}.
\]
One can easily verify that this yields
\begin{equation} \label{inclusion}
\supp A \subseteq \supp B \iff \supp \phi(A) \subseteq \supp \phi(B).
\end{equation}
From \eqref{inclusion} we infer that $\phi$ preserves the rank of the elements of $\Lhp$.

CASE II.
If $\alpha>1$, then \eqref{inclusion} follows immediately from the property that $Q_{\alpha}^{\flat}(A\Vert B)$ is finite if and only if $\supp A \subseteq \supp B$.

As a corollary of the above, we conclude that the transformation $\phi$ maps the set $\Lhpi$ into itself.
An application of Theorem~\ref{T:1} gives us that there is either a unitary or an antiunitary operator $U$ on $\cH$ and a positive scalar $\lambda$ such that $\phi(A)=\lambda UAU^*$ holds for any $A\in\Lhpi$.
In what follows we determine $\phi$ on the whole $\Lhp$.
Considering the transformation $\psi(.)=(1/\lambda)U^*\phi(.)U$ we obtain a bijective map which preserves the divergence $D_{\alpha}^{\flat}(. \Vert .)$ and has the additional property that it acts as the identity on $\Lhpi$. In particular, we have $\psi(I)=I$.
Let $P$ be any rank-one projection. As $\phi$ preserves the rank-one operators, so is $\psi$.
Clearly, for any such operator $R$ we have $D_{\alpha}^{\flat}(R\Vert I)=0$ if and only if $R\in\Ph$. Indeed, if $R=z\otimes z\in\Ph$, then we have $D_{\alpha}^{\flat}(R\Vert I)= - \langle (\log I)z,z \rangle =0$. Conversely, the condition
$D_{\alpha}^{\flat}(R\Vert I)=0$ is equivalent to
\[
Q_{\alpha}(R\Vert I)=\frac{\tr \left( (R/\|R\|)\exp(\alpha \hat{\log}(R))\right)}{\tr R}=\frac{\|R\|^{\alpha}}{\|R\|}=1
\]
implying that $\|R\|=1$. As $\psi(I)=I$,
we infer that if $P$ is a rank-one projection, then so is $\psi(P)$.

Next choose unit vectors $x$ and $y$ from the ranges of $P$ and $\psi(P)$, respectively. For any $A\in\Lhpi$, we have
\[
-\langle (\log A) x, x \rangle=D_{\alpha}^{\flat}(P \Vert A)= D_{\alpha}^{\flat}(\psi(P) \Vert A) = -\langle (\log A) y, y \rangle
\]
implying that the vectors $x,y$ are scalar multiples of each other. It means that $\psi(P)=P$, whence $\psi$ acts as the identity on the set of rank-one projections as well. This gives us that $\psi$ preserves the supports of the operators in $\Lhp$. If $\supp P \subseteq \supp A = \supp \psi(A)$, then we obtain that for any unit vector $z\in \supp P$
\[
-\langle (\hat{\log}(A)) z, z \rangle=D_{\alpha}^{\flat}(R \Vert A)= D_{\alpha}^{\flat}(R \Vert \psi(A)) = -\langle (\hat{\log}(\psi(A))) z, z \rangle.
\]
It follows that $\psi(A)=A$ holds on $\supp A$ and thus $\psi$ is the identity transformation on the whole $\Lhp$. This completes the proof.
\end{proof}

Now we are in a position to verify our second theorem. We remark that in the argument we borrow some ideas from \cite{virosztek}.

\begin{proof}[Proof of Theorem~\ref{T:2}]

Let us define the function $h_f:[0,+\infty[\to \mathbb{R}$ by the formula
\[
h_f(t)=\int_{]0,+\infty[} t/(t+s) d\nu_f(s)\quad(t\in[0,\infty[)
\]
where $\nu_f$ is the Radon measure associated with the operator convex function $f$ appearing in \eqref{repr}. Observe that $h_f$ is a non-negative operator monotone function and by \eqref{repr} for any operators $A,B\in\Lhp$ we have the following equality:
\begin{equation} \label{Drep}
\begin{gathered}
D_f(A\Vert B)= f(0)\tr B + \omega(f)\tr A -\\
\lim_{\varepsilon\searrow 0}\tr (B+\varepsilon I)^{1/2}h_f((B+\varepsilon I)^{-1/2}A(B+\varepsilon I)^{-1/2})(B+\varepsilon I)^{1/2}.
\end{gathered}
\end{equation}
By the theory of Kubo-Ando means \cite{KuAn}, for any real number $\varepsilon>0$  the operator
\[
(B+\varepsilon I)^{1/2}h_f\left((B+\varepsilon I)^{-1/2}A(B+\varepsilon I)^{-1/2}\right)(B+\varepsilon I)^{1/2}
\]
is the mean $\sigma_{h_f}$ of $B+\varepsilon I$ and $A$ associated with the function $h_f$. The continuity property of such means tells us that for any sequences $A_n\downarrow A$ and $B_n\downarrow B$ in $\Lhp$ we have $A_n\sigma_{h_f} B_n\downarrow A\sigma_{h_f}B$ where $\downarrow$ refers to the convergence of a monotone decreasing sequence in the strong operator topology. The previous observations yield that
\begin{equation} \label{Drepm}
D_f(A\Vert B)= f(0)\tr B + \omega(f)\tr A -\tr B\sigma_{h_f}A,\quad A,B\in\Lhp.
\end{equation}
We aim to prove that $\phi$ preserves the trace. Let us see how this results in the desired conclusion. By the last displayed equality, the trace preserving property implies that $\phi$ leaves the quantity $\tr A\sigma_{h_f}B$ invariant. Then it is straightforward to see that $\phi$ fulfills the conditions of \cite[Theorem 2]{integralequ} and the application of that result yields the statement of Theorem~\ref{T:2}.
Apparently, if $f(1)\neq 0$, then we have $$f(1)\tr \phi(A)=D_f(\phi(A)\Vert\phi(A))=D_f(A\Vert A)=f(1)\tr A$$ implying that $\phi$ leaves the trace invariant. Therefore, below we suppose that $f(1)=0$.

Next we prove the following characterization of the element $0\in\Lhp$.
Let $A\in\Lhp$. Then $A=0$ if and only if
\[
D_f(B\Vert C)\leq D_f(B\Vert A)+D_f(A\Vert C),\qquad B,C\in\Lhp.
\]
Since the quantity $\tr B\sigma_{h_f}A$ is non-negative, we have $D_f(B\Vert C)\leq f(0)\tr C + \omega(f)\tr B$, by \eqref{Drepm}. Furthermore, $D_f(B\Vert 0)=\omega(f)\tr B$ and $D_f(0\Vert C)=f(0)\tr C$. This shows $$D_f(B\Vert C)\leq D_f(B\Vert 0)+D_f(0\Vert C).$$
To see the converse, assume that $A\neq 0$ and thus $\tr A\neq 0$.
As $f$ is a non-affine operator convex, and hence a strictly convex function on the interval $]0,+\infty[$ we obtain that the first order divided difference function
\[
R(t,s):=\frac{f(t)-f(s)}{t-s}
\]
is strictly monotone increasing in both of its variables. Therefore, for any $t,s>1$ we have
\[
\frac{f(t)-f(1)}{t-1}<\frac{f(st)-f(s)}{s(t-1)}
\]
or, equivalenty, since $f(1)=0$ one has
\[
sf(t)+f(s)<f(ts).
\]
Multiplying by $\tr A$ we obtain
\[
f(t)\tr(sA)+f(s)\tr A<f(ts)\tr A.
\]
The divergence $D_f(.\Vert .)$ is positively homogeneous, whence the above displayed inequality is equivalent to
\[
D_f(tsA\Vert sA)+D_f(sA\Vert A)<D_f(tsA\Vert A)
\]
which verifies our claim.

Since $\phi(0)=0$ we deduce from \eqref{Drep} that
\[
f(0)\tr \phi(A)=D_f(0\Vert \phi(A))=D_f(0\Vert A)=f(0)\tr A
\]
implying that $\phi$ is trace preserving whenever $f(0)\neq 0$.

Finally, if $f(0)=f(1)=0$, then $f$ necessarily has a minimum $c<0$ because of strict convexity. In such a case we can argue just as in the corresponding part in \cite[Theorem 2.3]{banach}. We assert that
\begin{equation} \label{inf}
\inf_{A\in\mathcal{L}(\mathcal{H})_+} D_f(A\Vert B) = c \tr B, \qquad B\in\mathcal{L}(\mathcal{H})_+
\end{equation}
holds. Indeed, one has
\[
D_f(tB\Vert B) = f(t) \tr B,  \qquad B\in\Lhp,\ t\in[0,\infty[
\]
and by taking minimum in $t$ here, we obtain that $\inf_{A\in\Lhp} D_f(A\Vert B) \leq c \tr B$.

To see the reverse inequality, if $B=0$, then the equation $D_f(A\Vert B)=0=c\tr B$ must hold, since $\tr B =0$. If $B\neq 0$, then we calculate
\[
\begin{gathered}
 \frac{D_f(A\Vert B+\varepsilon I)}{\tr (B+ \varepsilon I)} = \tr\left( \frac{B+\varepsilon I}{\tr (B+\varepsilon I)} f\left( {(B+\varepsilon I)}^{-1/2} A {(B+\varepsilon I)}^{-1/2} \right) \right) \geq \\   f\left( \frac{\tr \left( {(B+\varepsilon I)}^{1/2} A {(B+\varepsilon I)}^{-1/2}\right)}{\tr (B+\varepsilon I)} \right)\geq c
\end{gathered}
\]
where the first inequality follows from \cite[A.2 Lemma]{HMPB} which claims that for every Hermitian operator $H$ and density operator $D$ acting on a finite dimensional Hilbert space and for any convex function $f$ we necessarily have $\tr D f(H) \geq f\left(\tr DH\right)$. Therefore, we arrive at the inequality
\[
D_f(A\Vert B)=\lim_{\varepsilon \searrow 0} D_f(A\Vert B+\varepsilon I)\geq \lim_{\varepsilon \searrow 0} c \tr(B+\varepsilon I)=c \tr B
\]
from which we deduce that \eqref{inf} holds for every $B\in\Lhp$. Hence the preserver property and the bijectivity of $\phi$ imply
\[
\begin{gathered}
c\tr B = \inf_{A\in\Lhp} D_f(A\Vert B) = \inf_{A\in\Lhp} D_f(\phi(A)\Vert \phi(B)) =\\
\inf_{A\in\Lhp} D_f(A\Vert \phi(B)) = c\tr \phi(B).
\end{gathered}
\]
Since $c\neq 0$ the above displayed formula yields that $\phi$ is trace preserving. The proof is complete.
\end{proof}

\section{Remarks}

Related to Theorems~\ref{T:1} and \ref{T:2} a natural question arises: what happens when $\phi$ acts on the set of density operators instead of the cones of either positive semidefinite or positive definite operators. It turns out that the machinery developed in \cite{MNSz} (see also \cite{banach},\cite{gaal}) based on the non-bijective version of Wigner's theorem concerning the most fundamental symmetry transformations (i.e. the maps preserving the transition probability between pure states) can be applied to achieve the following result: if $\phi$ is an arbitrary (no bijetivity is assumed) transformation on the set of density operators which has one of the preserver properties formulated in those theorems, then it is necessarily implemented by either a unitary or antiunitary similarity transformation.
We have decided not to formulate the corresponding results here.

Nevertheless, another interesting and exciting question arises. One can observe that in the case $\alpha=1/2$, the algebraic operation $$A\diamond B= \exp\left(\frac{\log A + \log B}{2}\right)$$ appearing in the quantity $D_{\alpha}^{\flat}(.\Vert.)$ is the so-called Log-Euclidean mean. In fact, this algebraic operation can be regarded as the \textit{relative operator entropy} version of the divergence $D_{1/2}^{\flat}(.\Vert.)$. It can be extended to the set of arbitrary (not necessarily invertible) positive semidefinite operators by the definition
\[
A\diamond B= P_{A,B} \exp\left( \frac{P_{A,B}(\hat{\log}A)P_{A,B} +P_{A,B}(\hat{\log} B)P_{A,B}}{2}\right)P_{A,B}\quad(A,B \in\Lhp)
\]
where $P_{A,B}$ denotes the projection on $\cH$ onto $\supp A \cap \supp B$.
One may ask how we can describe the corresponding isomorphisms with respect to this operation. Our last theorem below gives us the answer under a mild regularity assumption (i.e., under a sort of continuity).

To formulate the result we need the concept of the logarithmic product.
This binary operation is given by $A \odot B=\exp(\log A+\log B)$ for any $A,B\in\Lhpi$. According to \cite{dolinar} it can be extended using the famous Lie-Trotter formula by the definition $A\odot B=\lim_{n\to\infty}\left(A^{1/n}B^{1/n}\right)^{n}$. We have the following explicit and very useful formula:
\[
A\odot B = P_{A,B} \exp(P_{A,B}(\hat{\log}A)P_{A,B} + P_{A,B}(\hat{\log} B)P_{A,B})P_{A,B}.
\]

Our last result reads as follows.

\begin{theorem}\label{T:3}
Let $\phi:\Lhp\to\Lhp$ be a bijective map satisfying
\[
\phi(A\diamond B)=\phi(A)\diamond\phi(B),\qquad A,B\in\Lhp.
\]
Assume that $\phi$ is continuous at $0$ with respect to the topology on $\Lhp$ inherited from the unique Hausdorff linear topology on $\Lhs$.
Then there exists an invertible either linear or conjugate linear operator $T$ on $\cH$ such that
\[
\phi(A)=\phi(I)\odot P_{T,A}\exp\left(P_{T,A} T(\hat{\log}A)T^* P_{T,A}\right)P_{T,A}
\]
where $P_{T,A}$ denotes the orthogonal projection on $\cH$ onto $\left(T(\ker A)\right)^{\perp}$.
\end{theorem}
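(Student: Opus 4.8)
The plan is to transport the whole problem to the linear space $\Lhs$ through the logarithm, where $\diamond$ becomes the arithmetic mean, and to read off the structure there, recovering the operator $T$ and the projections $P_{T,A}$ from the support behaviour of $\phi$. First I would exploit that $\supp(A\diamond B)=\supp A\cap\supp B$: this gives $A\diamond B=0$ precisely when the supports are orthogonal, and it shows that $0$ is the unique $\diamond$-absorbing element, whence the morphism property forces $\phi(0)=0$ and that $\phi$ preserves orthogonality of supports in both directions. Running verbatim the lattice computation used in the extension of Theorem~\ref{T:1}, I would deduce $\supp A\subseteq\supp B\iff\supp\phi(A)\subseteq\supp\phi(B)$, so $\phi$ preserves rank; in particular $\phi(\Lhpi)=\Lhpi$, and $\ker\phi(A)=\Theta(\ker A)$ defines a dimension-preserving automorphism $\Theta$ of the subspace lattice of $\cH$. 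By the fundamental theorem of projective geometry (for $\dim\cH\geq 3$; the case $d=2$ is classical) this $\Theta$ is implemented by an invertible linear or conjugate-linear operator $T$, so that $\ker\phi(A)=T(\ker A)$ and $P_{T,A}$ is the projection onto $(T\ker A)^\perp$.

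Next I would treat the invertible part analytically. On $\Lhpi$ the map $\log$ is a bijection onto $\Lhs$ sending $A\diamond B$ to $\tfrac12(\log A+\log B)$, so $\psi:=\log\circ\phi\circ\exp$ is a bijection of $\Lhs$ obeying Jensen's equation $\psi(\tfrac{a+b}2)=\tfrac12(\psi(a)+\psi(b))$; setting $g:=\psi-\psi(0)$ the usual reduction makes $g$ additive. Here the continuity of $\phi$ at $0$ enters: translated through $\log$, it states that $\psi$ carries the open spectral half-space $\{a:a<(\log\delta)I\}$ into $\{y:y<(\log\epsilon)I\}$, so each functional $a\mapsto\langle g(a)x,x\rangle$ is additive and bounded above on an open set, hence $\mathbb R$-linear. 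Thus $g=:L$ is a real-linear bijection of $\Lhs$ and
\[
\phi(A)=\exp(L(\log A)+H),\qquad A\in\Lhpi,\quad H:=\log\phi(I).
\]
Applying continuity at $0$ once more to $tI\to 0$ and to $\exp(-sa)\to 0$ (for $a>0$) forces $L(I)>0$ and $L(a)>0$ for every positive definite $a$, whence, by closedness of the cone, $L$ maps $\Lhp\cap\Lhs$ into itself.

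The decisive point, and the step I expect to be the main obstacle, is to upgrade this one-sided inclusion to the statement that $L$ is a cone \emph{automorphism} (equivalently, a congruence), since continuity at $0$ by itself yields only $L(\Lhp\cap\Lhs)\subseteq\Lhp\cap\Lhs$: an invertible linear self-map of a cone need not be onto it, as a non-isometric scaling of the $2\times 2$ Lorentz cone shows. The missing rigidity must come from the global bijectivity of $\phi$ on all of $\Lhp$ together with the rank preservation of the first paragraph. The intended route is to match the analytic form with the lattice automorphism $\Theta$: approaching a rank-$(d-1)$ boundary projection $P$ from inside $\Lhpi$ along $\exp(-s(I-P))\to P$, the family $\exp(-sL(I-P)+H)$ decreases in the chaotic order to a limit whose rank equals $\dim\ker L(I-P)$; forcing this to match the rank dictated by $\Theta$ shows $L(I-P)$ is positive of rank $d-1$, and, ranging over all faces, that $L$ sends each face of the cone onto a face of the same dimension. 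Such an $L$ is a cone automorphism, so by the classical description of the linear automorphism group of the positive semidefinite cone, $L(a)=TaT^*$ for an invertible linear or conjugate-linear $T$, which one checks coincides with the operator implementing $\Theta$. The hard part is precisely making the boundary passage rigorous, i.e.\ justifying that the limits computed inside $\Lhpi$ represent $\phi$ on $\partial\Lhp$; this is where the continuity at $0$ and relations such as $A\diamond(tI)=\sqrt t\,A^{1/2}\to 0$ are leveraged. Granting this, on $\Lhpi$ one has $\phi(A)=\exp(T(\log A)T^*+H)=\phi(I)\odot\exp(T(\log A)T^*)$, which is the asserted formula when $\ker A=\{0\}$, since then $P_{T,A}=I$.

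Finally I would extend to the boundary. For $A\in\Lhp$ put $A_\epsilon=A+\epsilon P_{\ker A}\in\Lhpi$, so that $\log A_\epsilon=\hat{\log}A+(\log\epsilon)P_{\ker A}$ and
\[
\phi(A_\epsilon)=\exp\!\big(T(\hat{\log}A)T^*+(\log\epsilon)\,TP_{\ker A}T^*+H\big).
\]
As $\epsilon\searrow 0$ the middle summand tends to $-\infty$ exactly along $T(\ker A)$, so the limit equals $P_{T,A}\exp\!\big(P_{T,A}(T(\hat{\log}A)T^*+H)P_{T,A}\big)P_{T,A}$; rewriting the $H$-term through $\phi(I)=\exp H$ and the explicit projection formula for $\odot$ gives precisely the expression in the statement. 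The only remaining point is to identify this limit with $\phi(A)$, which again rests on the continuity of $\phi$ at $0$ together with the morphism property, completing the argument.
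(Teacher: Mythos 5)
Your route is genuinely different from the paper's, and it contains two gaps which you yourself flag as ``the main obstacle'' and ``the only remaining point''; unfortunately these are exactly the substance of the theorem. For comparison, the paper's proof is a short reduction: one checks that $\phi$ preserves invertibility (an operator $A\in\Lhp$ is nonsingular iff $A\diamond X=B$ is solvable for every $B$), passes to $\psi(A)=\phi(I)^{-1}\odot\phi(A)$, which by Lemma~\ref{L:5} still preserves $\diamond$ and fixes $I$; then $\psi(\sqrt A)=\psi(A\diamond I)=\sqrt{\psi(A)}$ shows $\psi$ commutes with the square root, hence preserves the logarithmic product $\odot$, and the Dolinar--Moln\'ar theorem \cite{dolinar} on $\odot$-automorphisms of $\Lhp$ delivers the stated form at once. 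Everything you are struggling with is the content of that cited theorem, which your proposal is in effect attempting to reprove from scratch.

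Concretely, the first gap is the upgrade from ``$L$ maps the positive semidefinite cone of $\Lhs$ into itself'' to ``onto itself.'' You cannot obtain this by running the argument for $\phi^{-1}$, since continuity at $0$ is assumed only for $\phi$; and your face-matching alternative already presupposes that the formula $\exp(L(\log A)+H)$, established only on $\Lhpi$, computes $\phi$ at singular operators as a limit from inside --- which is precisely the second gap. Continuity of $\phi$ at the single point $0$ gives no control of $\phi$ near a nonzero singular $A$, and the morphism property alone yields only relations such as $\phi(\sqrt t\,A^{1/2})=\phi(A)\diamond\phi(tI)$, which constrain $\phi(A)$ but do not identify it with $\lim_{\epsilon\searrow 0}\phi(A+\epsilon P_{\ker A})$. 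Until both points are supplied, your argument proves the theorem only on $\Lhpi$. A smaller issue: the fundamental theorem of projective geometry implements your kernel-lattice automorphism by a $\sigma$-semilinear map for an \emph{arbitrary} automorphism $\sigma$ of $\CC$, so ruling out wild $\sigma$ requires a further argument (this becomes moot if $T$ is instead extracted from the cone-automorphism step, but that step is the one left open). The parts of your proposal that do go through --- $\phi(0)=0$, preservation of support inclusions and of $\Lhpi$, the Jensen-equation reduction on $\Lhs$ and the linearity of $L$ from boundedness above on an open set, and $L(\Lhpi)\subseteq\Lhpi$ via $\exp(-sa)\to 0$ --- are correct and would constitute the opening of a self-contained proof, but as it stands the argument is incomplete.
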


For the proof we need the following lemma.

\begin{lemma}\label{L:5}
Let $\phi:\Lhp\to\Lhp$ be a bijective map satisfying
\begin{equation}\label{mean}
\phi(A\diamond B)=\phi(A)\diamond\phi(B),\qquad A,B\in\Lhp.
\end{equation}
Then for a given $X\in\Lhp$ the transformation $A\mapsto X\odot\phi(A)$ also satisfies \eqref{mean}.
\end{lemma}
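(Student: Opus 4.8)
The plan is to reduce the assertion to a single distributivity-type identity relating the two operations $\diamond$ and $\odot$, which can then be checked by passing to (extended) logarithms. Writing the candidate map as $\psi(A)=X\odot\phi(A)$ and using the hypothesis $\phi(A\diamond B)=\phi(A)\diamond\phi(B)$, we have
\[
\psi(A\diamond B)=X\odot\bigl(\phi(A)\diamond\phi(B)\bigr),
\]
so the lemma will follow at once once we prove the algebraic identity
\[
X\odot(Y\diamond Z)=(X\odot Y)\diamond(X\odot Z),\qquad X,Y,Z\in\Lhp,
\]
and specialize it to $Y=\phi(A)$, $Z=\phi(B)$. Hence the whole content of the statement is this left distributivity of the logarithmic product over the Log-Euclidean mean.

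On the positive definite cone $\Lhpi$ the identity is immediate: there $\odot$ and $\diamond$ are governed by $\log(X\odot Y)=\log X+\log Y$ and $\log(X\diamond Y)=\tfrac12(\log X+\log Y)$, and since $\exp$ is a bijection of $\Lhs$ onto $\Lhpi$ it suffices to compare logarithms. A direct computation shows that the logarithm of each side equals $\log X+\tfrac12(\log Y+\log Z)$.

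For general, possibly singular, operators the real work lies in the bookkeeping of the orthogonal projections onto common supports, and this is the step I expect to be the main obstacle. First I would track supports, using $\supp(Y\diamond Z)=\supp(Y\odot Z)=\supp Y\cap\supp Z$, to see that both sides are supported exactly on $S:=\supp X\cap\supp Y\cap\supp Z$; writing $P_S$ for the projection onto $S$, it then remains to compare the operators on $S$. The decisive observation is that $S$ is contained in each of the intermediate common supports appearing on either side, so that each intermediate projection $R$ satisfies $RP_S=P_SR=P_S$. Combining this with the formulas $\hat{\log}(U\odot V)=P_{U,V}\hat{\log}(U)P_{U,V}+P_{U,V}\hat{\log}(V)P_{U,V}$ and the analogue for $\diamond$ carrying the factor $\tfrac12$, every compression $P_S\hat{\log}(\cdot)P_S$ collapses, and I would check that the exponents produced by $X\odot(Y\diamond Z)$ and by $(X\odot Y)\diamond(X\odot Z)$ both equal
\[
P_S\hat{\log}(X)P_S+\tfrac12\bigl(P_S\hat{\log}(Y)P_S+P_S\hat{\log}(Z)P_S\bigr).
\]
Since each side is then $P_S\exp(\,\cdot\,)P_S$ of this common exponent, the two coincide, which gives the identity and hence the lemma. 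The only genuinely delicate points are the absorption relations $RP_S=P_S$ for the nested projections and the verification that the supports match; once these are in place the remainder is a routine compression calculation.
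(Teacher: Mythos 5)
Your proposal is correct, and its first step --- reducing the lemma to the distributivity identity $X\odot(Y\diamond Z)=(X\odot Y)\diamond(X\odot Z)$ --- is exactly what the paper's proof amounts to. Where you diverge is in how that identity is established. The paper does it algebraically: it observes that $A\diamond B=\sqrt{A}\odot\sqrt{B}$ and that $\sqrt{A}\odot\sqrt{B}=\sqrt{A\odot B}$ (the latter because $(\sqrt{A}\odot\sqrt{B})^{2}=A\odot B$ by commutativity and associativity of $\odot$), then writes $X=\sqrt{X}\odot\sqrt{X}$ and regroups the factors, so the whole computation happens inside the commutative semigroup $(\Lhp,\odot)$ without ever unpacking the projections. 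You instead verify the identity head-on by comparing supports and exponents; this works because your two key ingredients are both valid: the exponent $P_{U,V}\hat{\log}(U)P_{U,V}+P_{U,V}\hat{\log}(V)P_{U,V}$ already commutes with $P_{U,V}$, so the outer compression and the subsequent $\hat{\log}$ invert cleanly and give $\hat{\log}(U\odot V)=P_{U,V}\hat{\log}(U)P_{U,V}+P_{U,V}\hat{\log}(V)P_{U,V}$, and the absorption $RP_{S}=P_{S}R=P_{S}$ holds for every intermediate support projection $R$ since $S=\supp X\cap\supp Y\cap\supp Z$ is contained in each intermediate support. Your route is more computational but also more self-contained: the paper's slicker argument quietly leans on associativity and commutativity of $\odot$ on singular operators, which, if one wanted to prove it from the explicit formula rather than cite the Lie--Trotter description, would require essentially the same projection bookkeeping you carry out explicitly. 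The trade-off is length versus transparency about where the support hypotheses enter; both are complete proofs.
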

\begin{proof}
Define $\psi(A)=X\odot\phi(A)$. Since $C\odot C = C^2$, we have
\[
(\sqrt{A}\odot \sqrt{B})^2=\sqrt{A}\odot \sqrt{B}\odot\sqrt{A}\odot\sqrt{B}=A\odot B
\]
implying that $\sqrt{A}\odot \sqrt{B}=\sqrt{A\odot B}$.
Hence we compute
\[
\begin{gathered}
\psi(A\diamond B)=X\odot\phi(A\diamond B)=X\odot(\phi(A)\diamond\phi(B))=\\
X\odot \sqrt{\phi(A)}\odot\sqrt{\phi(B)}=\sqrt{X}\odot \sqrt{\phi(A)}\odot\sqrt{X}\odot \sqrt{\phi(B)}=\\
\sqrt{X\odot\phi(A)}\odot\sqrt{X\odot\phi(B)}=\psi(A)\diamond\psi(B)
\end{gathered}
\]
which yields the desired conclusion.
\end{proof}

We finish the paper with the proof of our last result.

\begin{proof}[Proof of Theorem~\ref{T:3}]

First, we claim that $\phi$ preserves the invertible elements. Indeed, it follows from the following characterization of such elements: the operator $A\in\Lhp$ is nonsingular if and only if for any $B\in\Lhp$ the equation
$A\diamond X = B$ has a solution $X\in\Lhp$. In particular, we have that $\phi(I)$ is invertible.

Let us define the transformation $\psi\colon\Lhp\to\Lhp$ by $\psi(A)=\phi(I)^{-1}\odot\phi(A)$. According to Lemma~\ref{L:5} the map $\psi$ preserves the Log-Euclidean mean. As $C\odot C^{-1}=I\ (C\in\Lhpi)$, the transformation $\psi$ has the additional property that it sends the identity into itself.
Therefore, we have
$\psi(\sqrt{A})=\psi(A\diamond I)=\psi(A)\diamond I=\sqrt{\psi(A)}$
meaning that $\psi$ is compatible with the square root operation. This easily yields that $\psi$ preserves the logarithmic product. Hence an application of \cite[Theorem]{dolinar} gives us that there exists an invertible either linear or conjugate linear operator $T$ on $\cH$ such that
\begin{equation}\label{psi}
\psi(A)=\phi(I)^{-1}\odot \phi(A)= P_{T,A}\exp\left(P_{T,A} T(\hat{\log}A)T^* P_{T,A}\right)P_{T,A}
\end{equation}
implying that $\phi$ is of the required form.
\end{proof}

\section{Acknowledgements}
The authors were supported by the "Lend\"ulet" Program (LP2012-46/2012) of the Hungarian Academy of Sciences and by the National Research, Development and Innovation Office -- NKFIH Reg. No.

\end{document}